\theoremstyle{plain}
 \newtheorem{theorem}{Theorem}[section]
\theoremstyle{plain}
 \newtheorem{lemma}[theorem]{Lemma}
\theoremstyle{definition}
\theoremstyle{definition}
\begin{document}

\title{Multiplicative functions with $f(p+q-n_0) = f(p)+f(q)-f(n_0)$}

\author{Poo-Sung Park}
\email{pspark@kyungnam.ac.kr}
\address{Department of Mathematics Education\\ 
Kyungnam University\\
Changwon-si, 51767 \\
Republic of Korea}


\maketitle

\begin{abstract}
Let $n_0$ be 1 or 3. If a multiplicative function $f$ satisfies $f(p+q-n_0) = f(p)+f(q)-f(n_0)$ for all primes $p$ and $q$, then $f$ is the identity function $f(n)=n$ or a constant function $f(n)=1$.
\end{abstract}


\section{Introduction}

In 2016 Chen, Fang, Yuan, and Zheng showed that if a multiplicative function $f$ satisfies $f(p+q+n_0) = f(p)+f(q)+f(n_0)$ with $1 \le n_0 \le 10^6$ then $f$ is the identity function provided $f(p_0) \ne 0$ for some prime $p_0$ \cite{CFYZ}. This is a variation of Spiro's paper in 1992 in which she dealt multiplicative functions satisfying $f(p+q) = f(p)+f(q)$ \cite{Spiro}. She called the set of primes an \emph{additive uniqueness set} for multiplicative functions $f$ with $f(p_0) \ne 0$ for some prime $p_0$.

A natural question follows about $n_0$ being negative for Chen et al.'s paper. It is natural to consider the condition $f(p+q-n_0) = f(p)+f(q)-f(n_0)$ with $n_0 = 1, 2, 3$ because a multiplicative function is defined on positive integers.

The author already studied a multiplicative function satisfying $f(p+q-2) = f(p)+f(q)-f(2)$, which also yields that the set of numbers 1 less than primes is an additive uniqueness set for multiplicative functions \cite{Park}.

In this article we classify multiplicative functions satisfying $f(p+q-n_0) = f(p)+f(q)-f(n_0)$ with $n_0 = 1,3$. For consistency we state the classification for $n_0 = 2$ as well.

\begin{theorem}\label{thm:-1}
If a multiplicative function $f$ satisfies $f(p+q-1) = f(p)+f(q)-f(1)$ for all primes $p$ and $q$, then $f$ is the identity function $f(n)=n$ or a constant function $f(n)=1$.
\end{theorem}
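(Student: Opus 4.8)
The plan is to use multiplicativity to normalize the hypothesis, then pin down $f(2)$, and finally prove a single affine formula for $f$ by strong induction, from which both conclusions fall out. Since $f$ is multiplicative we have $f(1)=1$, so the identity becomes $f(p+q-1)=f(p)+f(q)-1$ for all primes $p,q$. Evaluating at $(p,q)=(2,2),(3,3),(2,5)$ gives $f(3)=2f(2)-1$, then $f(5)=2f(3)-1$, then $f(6)=f(2)+f(5)-1$, so $f(6)$ is an affine function of $f(2)$; comparing this with $f(6)=f(2)f(3)$ (valid because $\gcd(2,3)=1$) yields the quadratic $(f(2)-1)(f(2)-2)=0$. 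Hence $f(2)\in\{1,2\}$. Writing $c=f(2)-1\in\{0,1\}$, the target is the single formula $f(n)=cn+1-c$ for every $n$, which reads $f\equiv 1$ when $c=0$ and $f(n)=n$ when $c=1$.

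Next I would extract the linear recursions that drive the induction. Taking $q=2$ gives $f(p+1)=f(p)+c$ for every prime $p$, and more generally, once $f(r)=cr+1-c$ is known for a particular prime $r$, choosing $q=r$ gives $f(p+r-1)=f(p)+(r-1)c$. I would then prove $f(n)=cn+1-c$ for all $n$ by strong induction. The composite case splits off cleanly: if $n=ab$ with $\gcd(a,b)=1$ and $1<a,b<n$, then multiplicativity and the inductive hypothesis give $f(n)=(ca+1-c)(cb+1-c)$, and a direct check shows this equals $cab+1-c$ precisely because $c\in\{0,1\}$ (the two admissible values are exactly the roots that render multiplicativity consistent). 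Thus the only arguments needing separate treatment are the prime powers $n=p^a$, which have no coprime factorization.

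The prime-power case is where the real work lies. When $p^a$ can itself be written as $s+t-1$ with $s,t$ primes smaller than $p^a$ -- equivalently when $p^a+1$ is a sum of two primes -- the functional equation together with the inductive hypothesis immediately gives $f(p^a)=cp^a+1-c$, since $s,t\ge 2$ forces $s,t\le p^a-1$. The systematic obstruction is the powers of two: since $2^a+1$ is odd, any representation $2^a+1=s+t$ forces $\{s,t\}=\{2,\,2^a-1\}$, and $2^a-1$ is composite for most $a$, so such $2^a$ admit no representation at all. To reach these stubborn values I would introduce an auxiliary prime $r$ coprime to $p$, chosen so that the \emph{larger} number $p^a r$ is representable as $s+t-1$; then multiplicativity $f(p^a r)=f(p^a)f(r)$, combined with the already-known $f(r)$ and with $f(s)+f(t)-1$ on the other side, determines $f(p^a)$. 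Ordering the induction so that every value invoked on the right-hand side is already established is a matter of bookkeeping, but it depends on having a guaranteed supply of such representations.

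I expect this last point to be the main obstacle: making the argument unconditional requires representation theorems for sums of two primes rather than Goldbach's conjecture itself. I would draw these from the circle of results used in the cited works \cite{CFYZ, Spiro} -- verified Goldbach ranges for small arguments together with the fact that the even numbers failing to be a sum of two primes are very sparse -- and exploit the freedom to choose the multiplier $r$ so that the even number actually needed is known to split. Once $f(n)=cn+1-c$ is established for all $n$, the theorem follows at once: $c=0$ gives the constant function $f(n)=1$, and $c=1$ gives the identity $f(n)=n$.
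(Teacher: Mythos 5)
Your opening moves coincide with the paper's: the evaluations at $(2,2)$, $(3,3)$, $(5,2)$ giving $f(2)^2-3f(2)+2=0$, the reduction to prime powers via multiplicativity, and the use of Riesel-type primes $k\cdot 2^r-1$ for the powers of two are exactly what the paper does (it cites OEIS A126717 for precisely this purpose). The genuine gap lies in your treatment of everything beyond computational verification, and it is concentrated in the two phrases ``a matter of bookkeeping'' and ``exploit the freedom to choose the multiplier $r$ so that the even number actually needed is known to split.'' The ordering problem is not bookkeeping: in $f(p^a)f(r)=f(s)+f(t)-1$ the primes $s,t$ satisfy $s+t=p^a r+1>p^a$, so strong induction on magnitude never supplies $f(s)$ and $f(t)$; likewise a Riesel prime $k\cdot 2^r-1$ exceeds $2^r$, and your recursion $f(P)=f(P+1)-c$ needs $f(2^b)$ when $P$ is a Mersenne prime. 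Large primes require powers of two and conversely --- a circle the paper breaks with a two-phase structure: first all odd $n$ within the verified Goldbach range $4\times 10^{18}$ \cite{OeS-H-P} (which covers every Riesel prime it needs, since only exponents $r\le 33$ ever arise), and then a separate argument for large $n$ that never attempts to resolve individual prime powers at all.

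More fundamentally, the density input cannot be used the way you propose. Lemma \ref{lem:almost_every_integer} says the Goldbach exceptions have density zero; but your candidate set $\{p^a r+1 : r \text{ prime}\}$ itself has density zero, so the theorem is perfectly compatible with every one of your candidates being an exception. This is exactly why Spiro's machinery is needed and why the paper imports it (Lemmas \ref{lem:m+q_in_H}--\ref{lem:H_n}): one first proves $f(m)=m$ for all $m$ in the Spiro set $H$ --- which contains all primes, since prime powers in $H$ above $10^{10}$ are automatically prime, and whose large primes are handled by the divisibility trick $6k\pm 1\mapsto 6k+3$ rather than by Goldbach representations --- and only then, for an arbitrary $n$ with $f(n)\ne n$, considers the set $H_n$ of coprime multiples of $n$ (Lemma \ref{lem:H_n}, with parity arranged so the relevant shifted numbers are even), which has \emph{positive lower density}, so that a density-zero exceptional set cannot contain it; some multiple $kn$ then admits a representation by primes $s,t$ at which $f$ is already known, forcing $f(kn)=kn$ and contradicting $f(kn)=k\,f(n)\ne kn$. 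Without first establishing $f$ at all primes, and without a positive-density multiplier set, your induction cannot reach large prime powers; for powers of two beyond the verified range it would even require the open statement that Riesel primes exist for every exponent. In short, the skeleton is right, but the step you deferred as bookkeeping is precisely the content of the paper's Lemmas \ref{lem:m+q_in_H}--\ref{lem:H_n} and its concluding density argument.
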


\begin{theorem}[\cite{Park}]\label{thm:-2}
If a multiplicative function $f$ satisfies $f(p+q-2) = f(p)+f(q)-f(2)$ for all primes $p$ and $q$, then $f$ is the identity function $f(n)=n$, a constant function $f(n)=1$, or $f(n)=0$ for $n \ge 2$ unless $n$ is odd and squareful.
\end{theorem}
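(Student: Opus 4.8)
The plan is to start from the normalization $f(1)=1$ (forced since $f$ is multiplicative and not identically zero), which turns the hypothesis into $f(p+q-2)=f(p)+f(q)-f(2)$ for all primes $p,q$. The first thing I would record is a structural peculiarity of this equation: if either prime equals $2$ the relation is vacuous, and if $p,q$ are both odd then $p+q-2$ is \emph{even}. Thus the functional equation never constrains $f$ on odd arguments directly; odd values can be reached only indirectly, through multiplicativity tying them to even numbers. This observation already anticipates the exotic third family in the statement.

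Writing $a=f(2)$ and $b=f(3)$, I would next compute the small even values $f(4),f(5),f(6),f(7),f(8),f(10),f(12),\dots$ from the functional equation (each even target being written as (sum of two odd primes)$\,-2$), and then impose the multiplicative identities $f(6)=f(2)f(3)$, $f(10)=f(2)f(5)$, $f(12)=f(3)f(4)$, and so on. For instance $f(6)=ab$ gives $f(5)=(a-1)b+a$, and combining the two expressions for $f(10)$ yields the factored relation $(a-2)\bigl[(a-2)b+a\bigr]=0$; one further relation (from $f(12)$) produces a cubic whose simultaneous solution, after discarding spurious roots by additional consistency checks, is $(a,b)\in\{(2,3),(1,1),(0,0)\}$. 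These three possibilities are exactly the identity, the constant $1$, and the third family.

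It then remains to bootstrap each case to all of $\mathbb{N}$. When $a\neq 0$ (the cases $(2,3)$ and $(1,1)$) I would argue by strong induction: even values come directly from the functional equation, while an odd value $f(m)$ is recovered by placing $m$ inside $2m$, using $f(2m)=a\,f(m)$ together with the independently known value $f(2m)$, the factor $a\neq 0$ making this link usable. When $a=0$ the functional equation forces $f\equiv 0$ on every even number and $f(p)=0$ on every prime, so $f(n)=0$ as soon as $n$ is even or divisible by a prime to the first power, that is, whenever $n$ is not odd and squareful; and on odd squareful $n$ the link $f(2m)=a\,f(m)$ degenerates, so no constraint survives and the values on the odd prime powers $p^{e}$ with $e\ge 2$ remain free, matching the statement.

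The main obstacle is the number-theoretic input hidden in ``every even target is (sum of two odd primes)$\,-2$'': this is a Goldbach-type statement, and it is needed both to evaluate $f$ on even numbers and, through the $2m$ device, on odd numbers. A rigorous treatment must either restrict it to ranges where the binary Goldbach property is verified and supply an asymptotic or flexible-choice argument beyond, or substitute a weaker unconditional result. The subtler, genuinely new point for $n_0=2$ is to certify the third family: I must show that the odd numbers escaping the value $0$ are \emph{precisely} the squareful ones and that on them the equation imposes nothing further, i.e.\ that this freedom is not secretly removed by some composite multiplicativity relation. Establishing this non-interference is where I would concentrate the effort.
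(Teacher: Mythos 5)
Your opening analysis is correct and well aimed: the observation that the equation is vacuous when a prime equals $2$ and otherwise only constrains \emph{even} arguments is precisely what distinguishes $n_0=2$ from $n_0=1,3$ and what creates the third family; your small-value algebra, including the factored relation $(a-2)\bigl[(a-2)b+a\bigr]=0$, checks out. Two caveats there. First, the system through $f(12)$ still admits the spurious root $(a,b)=(2,-1)$, and it is not killed by anything you wrote down: one has to push further, e.g.\ the chain $f(8)=2f(5)-f(2)=0$, hence $f(7)=3$, $f(13)=9$, $f(17)=13$, gives $f(20)=f(17)+f(5)-f(2)=12$, contradicting $f(20)=f(4)f(5)=-4$; a phrase like ``additional consistency checks'' must be replaced by such an explicit computation. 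Second, the ``non-interference'' point you propose to concentrate on is actually the easy direction: any multiplicative $f$ vanishing on every $n\ge 2$ that is not odd and squareful satisfies the hypothesis trivially, because every nonvacuous target $p+q-2$ is even and both sides are then $0$.

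The genuine gaps are in the bootstrap, and they are two. (i) Your induction is circular as stated: to recover an odd value $f(m)$ you invoke the ``independently known'' value $f(2m)$, but your only mechanism for knowing $f(2m)$ is the functional equation, which expresses it through $f(p),f(q)$ with $p+q=2m+2$; these odd primes can lie in $(m,2m)$, so they are not covered by a strong induction hypothesis at stage $m$. The working device (used in \cite[\S 4]{Park}, and mirrored for $n_0=3$ in Lemma \ref{lem:odd<10^10} via the target $6k+3$) is different: for an odd prime $n$ one writes $f(n+1)=f(n)+f(3)-f(2)$ and evaluates $f(n+1)$ by \emph{multiplicativity} from strictly smaller arguments $2^s$ and $t$ with $n+1=2^st$, with separate treatment of the degenerate case $t=1$ and of the $2$-power values themselves (compare Lemma \ref{lem:even<10^10}). (ii) More fundamentally, you defer the Goldbach input, but that deferred step \emph{is} the theorem: everything else is finite algebra. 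The unconditional completion requires exactly the machinery this paper assembles for $n_0=3$ and which \cite{Park} uses for $n_0=2$: verified Goldbach up to $4\times 10^{18}$ to settle all $n<10^{10}$, the Spiro set $H$ and Lemma \ref{lem:m+q_in_H} to control $f$ on $H$ beyond that range (Lemma \ref{lem:n_in_H}), the positive-lower-density sets $H_n$ of Lemma \ref{lem:H_n}, and the Chudakov--Estermann--van der Corput theorem (Lemma \ref{lem:almost_every_integer}) to show that a single value with $f(n)\ne n$ would produce a positive-density set of even numbers that are not sums of two primes. None of this, nor any substitute for it, appears in your plan even in outline; as it stands the proposal is a correct opening together with an accurate list of the obstacles, not a proof, and at best it yields the theorem conditionally on the Goldbach conjecture.
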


\begin{theorem}\label{thm:-3}
If a multiplicative function $f$ satisfies $f(p+q-3) = f(p)+f(q)-f(3)$ for all primes $p$ and $q$, then $f$ is the identity function $f(n)=n$ or a constant function $f(n)=1$.
\end{theorem}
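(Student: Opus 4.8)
The plan is to pin down $f(2)$ first, then split into a constant branch and an identity branch and promote each equality from small arguments to all $n$ by induction and multiplicativity. Write $a=f(2)$. Multiplicativity gives $f(1)=1$, and $p=q=2$ yields $f(1)=2f(2)-f(3)$, so $f(3)=2a-1$. Because $3+q-3=q$, every instance with $p=3$ or $q=3$ is vacuous, so the content sits in two families: taking $p=2$ gives $f(q-1)=f(q)-a+1$ for odd primes $q$, which I will call (A); and for odd primes $p,q$ the equation reads $f(p+q-3)=f(p)+f(q)-(2a-1)$, which I will call (B).

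First I would determine $a$. Running the primes up to $11$ through (A) and (B), together with $f(6)=f(2)f(3)$ and $f(10)=f(2)f(5)$, lets me write $f(4),f(5),f(7),f(11)$ as polynomials in $a$ and produces two expressions for $f(10)$; equating them collapses to $a(a-1)(a-2)=0$. The root $a=0$ is eliminated by a multiplicativity clash: it forces $f(3)=f(5)=-1$, hence $f(15)=f(3)f(5)=1$, whereas (B) through $15=5+13-3$ (using $f(13)=-1$, itself from (A) and $f(12)=f(4)f(3)$) gives $f(15)=-1$. Thus $a\in\{1,2\}$, the two values realized by $f\equiv 1$ and $f=\mathrm{id}$.

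It remains to show each branch is forced everywhere, which I would do by induction on $n$ combined with multiplicativity (so composite non-prime-powers are immediate). For $a=1$, (A) reads $f(q-1)=f(q)$, so a prime $p$ is handled directly by $f(p)=f(p-1)$ and the factorization of $p-1$. For $a=2$, (A) reads $f(q-1)=f(q)-1$ and (B) reads $f(p+q-3)=f(p)+f(q)-3$; the engine for primes is the following device. To evaluate a prime $p$, choose an odd prime $q$ with $3<q<p$ for which $m=p+q-3$ is composite; then $p<m<2p$ and $p\nmid m$, so every prime factor of $m$ lies below $p$, whence $f(m)=m$ from the already-known values at smaller prime powers, and (B) gives $f(p)=m-q+3=p$. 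Existence of a usable $q$ I would guarantee by forcing $3\mid m$: choose $q$ in the (nonzero) class $-p \bmod 3$, which contains primes by Dirichlet's theorem, disposing of the finitely many small $p$ admitting no such $q$ below $p$ by the explicit chain already in hand ($f(5)=5$, $f(7)=7$, $f(11)=11$, $f(13)=13$).

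The main obstacle is the prime powers $p^{e}$ with $e\ge 2$, which neither family reaches directly, and doing so without appealing to Goldbach's conjecture. My intended route is a two‑stage bootstrap: once $f$ is known on all primes, I would fix a prime power $w$, invoke Dirichlet's theorem to pick a prime $\ell\equiv 1\pmod{w}$ with $\ell\not\equiv 1\pmod{wp}$, write $\ell-1=w\,t$ with $\gcd(t,w)=1$, and read off from (A) the relation $f(w)f(t)=f(\ell-1)=\ell-1=w\,t$, so that $f(w)=w$ provided $f(t)=t$. Settling the powers of $2$ first (so that the even cofactors occurring for odd $p^{e}$ are already controlled), the delicate technical heart is arranging the cofactor $t$ to be squarefree, since then $f(t)=t$ follows at once from the established values at primes. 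Verifying that such a prime $\ell$ exists for every $w$ — an elementary counting or sieve estimate on squarefree values in the relevant arithmetic progression — is the step I expect to require the most care; once it is in place, multiplicativity closes both the $a=1$ and $a=2$ branches.
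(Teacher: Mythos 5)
Your determination of $f(2)$ is fine (your cubic $a(a-1)(a-2)=0$ and the elimination of $a=0$ via $f(15)=f(3)f(5)$ both check out), and your device for primes is sound as far as it goes: with $q\equiv -p \pmod 3$, $q<p$, the number $m=p+q-3$ is composite, divisible by $3$, and all its prime factors lie below $p$. But the overall architecture has a genuine circularity. Your Stage 1 (induction over primes) explicitly invokes ``the already-known values at smaller prime powers'' --- and it must, since $m$ may well be divisible by $9$, $25$, $49$, etc., so $f(m)=m$ requires $f$ at prime powers, not just at primes below $p$. Yet those prime-power values are only produced in Stage 2, which begins ``once $f$ is known on all primes'' and genuinely needs that: the relation $f(w)f(t)=f(\ell-1)=\ell-1$ requires $f(\ell)=\ell$ for a prime $\ell\equiv 1\pmod w$, which is necessarily \emph{larger} than $w$. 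So primes need prime powers below them, and prime powers need primes above them; neither stage can run first, and there is no well-founded ordering for a joint induction. Patching this forces you to import the squarefree-cofactor trick into Stage 1 as well (choose $q<p$ with $(p+q-3)/3$ squarefree and prime to $3$), and that version is strictly harder than your Stage 2 lemma because $q$ is confined to the bounded range $(3,p)$ rather than being free to be arbitrarily large.

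This points to the second gap: the sieve statement you defer --- for every prime power $w$ there is a prime $\ell\equiv 1\pmod w$, $\ell\not\equiv 1\pmod{wp}$, with $(\ell-1)/w$ squarefree --- is asserted, not proved, and it is not ``elementary counting.'' The heuristic is favorable (the bad local densities sum to $\sum_r 1/(r(r-1))\approx 0.77<1$), but a proof needs equidistribution of primes in progressions to moduli $wr^2$ (Siegel--Walfisz for small $r$, Brun--Titchmarsh plus trivial bounds for the tail), i.e., a Mirsky-type theorem in arithmetic progressions; and the Stage-1 variant additionally needs it with the prime constrained below $p$. By contrast, the paper breaks the same prime-power deadlock with entirely different inputs: numerically verified Goldbach (up to $4\times 10^{18}$) and Proth-prime tables settle everything below $10^{10}$, and for large $n$ it uses Spiro's set $H$ (which by construction contains no high prime powers), the positive lower density of the sets $H_n$, and the Chudakov--Estermann--van der Corput theorem that almost all even numbers are sums of two primes. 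Your route, if the sieve lemmas were proved and the induction restructured to avoid the circularity, would be an attractive alternative free of computational verification --- but as written the argument does not close.
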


Theorem \ref{thm:-2} for $n_0 = 2$ has one more option. We give a proof for Theorem \ref{thm:-3}. The proof of Theorem \ref{thm:-1} is similar and the proof of Theorem \ref{thm:-2} is given in \cite[\S 4]{Park}.

\section{Lemmas}

\begin{lemma}\label{lem:235711}
Assume a multiplicative function $f$ satisfies $f(p+q-3) = f(p) + f(q) - f(3)$ for all primes $p$ and $q$. Then, $f(n) = 1$ or $f(n) = n$ for $n = 2,3,5,7$, and $11$.
\end{lemma}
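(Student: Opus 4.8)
The plan is to feed small prime pairs into the functional equation so that each composite argument $p+q-3$ factors into primes that are already under control, thereby converting multiplicativity into a system of polynomial equations in the single unknown $a=f(2)$. Since $f$ is multiplicative, $f(1)=1$. The substitution $p=q=2$ makes $p+q-3=1$, so $1=2f(2)-f(3)$ and hence $f(3)=2a-1$. Taking $p=q=5$ and $p=q=7$ gives $p+q-3=7$ and $11$, yielding $f(7)=2f(5)-f(3)$ and $f(11)=2f(7)-f(3)$; thus every value on the list is expressible through $a$ and $f(5)$.

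The next step is to pin down $f(5)$ and then close the system. I would use $p=2,\,q=7$, where $p+q-3=6=2\cdot 3$ forces $f(6)=f(2)f(3)$; comparing this with $f(2)+f(7)-f(3)$ and the expression for $f(7)$ above should give $f(5)=a^2+a-1$. Substituting this into the relation from $p=2,\,q=11$ (there $p+q-3=10=2\cdot 5$, so $f(10)=f(2)f(5)$) collapses everything to a single cubic, which I expect to factor as $a(a-1)(a-2)=0$. So $f(2)\in\{0,1,2\}$, with $a=1$ producing the constant values and $a=2$ producing the identity values on $\{2,3,5,7,11\}$.

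The main obstacle is the spurious root $a=0$, which propagates to $f(3)=f(5)=f(7)=f(11)=-1$ and satisfies every relation used so far; eliminating it demands one more independent equation, obtained by invoking a composite whose factorization mixes $3$ and $5$. The plan is to take $p=5,\,q=13$, so that $p+q-3=15=3\cdot 5$ and $f(15)=f(3)f(5)$, while $p=5,\,q=11$ supplies $f(13)=f(5)+f(11)-f(3)$ to evaluate the right-hand side. The resulting identity $f(3)f(5)=2f(5)+f(11)-2f(3)$ reads $1=-1$ at $a=0$ and so discards that case. Once reduced to $a\in\{1,2\}$, the established formulas for $f(3),f(5),f(7),f(11)$ give either $f(n)=1$ for all five values or $f(n)=n$ for all five, which is exactly the assertion.
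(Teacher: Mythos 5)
Your proof is correct and follows essentially the same strategy as the paper: feed small prime pairs into the functional equation and use multiplicativity at the composites $10$ and $15$ (you add $6$) to reduce everything to a polynomial in $a=f(2)$; the paper works with the pairs $(2,2)$, $(5,5)$, $(11,2)$, $(7,7)$, $(11,7)$ and eliminates linearly to reach $a^2-3a+2=0$ directly, so it never encounters the spurious root. Your detour through $f(6)$ yields the cubic $a(a-1)(a-2)=0$, and the relation you use to kill $a=0$, namely $f(3)f(5)=2f(5)+f(11)-2f(3)$, is exactly the paper's equation $bc=e+d-b$ after substituting $d=2c-b$, so the two arguments are equivalent in substance.
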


\begin{proof}
Note that $f(1)=1$ and the equalities
\begin{align*}
f(1) 
	&= f(2+2-3) = f(2)+f(2)-f(3), \\
f(7) 
	&= f(5+5-3) = f(5)+f(5)-f(3), \\
f(10) 
	&= f(2)\,f(5) \\
	&= f(11+2-3) = f(11)+f(2)-f(3), \\
f(11) 
	&= f(7+7-3) = f(7)+f(7)-f(3), \\
f(15) 
	&= f(3) \,f(5) \\
	&= f(11+7-3) = f(11)+f(7)-f(3).
\end{align*}

Let $a=f(2)$, $b=f(3)$, $c=f(5)$, $d=f(7)$, $e=f(11)$. Then,
\begin{align}
1 &= 2a-b \\
d &= 2c-b \\
ac &= e+a-b \\
e &= 2d-b \\
bc &= e+d-b.
\end{align}

The equation (3) becomes
\[
	ac = 4c - 7a + 4
\]
by the equations (1), (2), and (4). Also, the equation (5) becomes
\[
	2ac = 7c - 10a + 5.
\]
So, $c = 4a-3$ and we obtain an equation $a^2 -3a + 2 = 0$.

Thus, $a=1$ or $a=2$ and it follows that
\[
\begin{array}{lllll}
f(2) = 1, & f(3) = 1, & f(5) = 1, & f(7) = 1, & f(11) = 1; \\
f(2) = 2, & f(3) = 3, & f(5) = 5, & f(7) = 7, & f(11) = 11.
\end{array}
\]
\end{proof}

\begin{lemma}\label{lem:odd<10^10}
The results in Lemma \ref{lem:235711} can be extended up to $n$ odd and $n < 10^{10}$.
\end{lemma}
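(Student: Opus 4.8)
The plan is to run a strong induction on odd $n$ that carries along the dichotomy supplied by Lemma~\ref{lem:235711}: on the set $\{2,3,5,7,11\}$ the function $f$ either takes the value $1$ everywhere (the \emph{constant branch}) or agrees with the identity (the \emph{identity branch}). I would prove that whichever branch holds on these small primes persists for all odd $n<10^{10}$, so that $f(n)=1$ on the constant branch and $f(n)=n$ on the identity branch. The base cases are $n=1$ together with $n=3,5,7,11$ from Lemma~\ref{lem:235711}. For the inductive step, if $n$ is an odd composite admitting a coprime factorization $n=ab$ with $1<a,b<n$, then multiplicativity gives $f(n)=f(a)f(b)$, which equals $ab=n$ on the identity branch and $1\cdot 1=1$ on the constant branch by the inductive hypothesis. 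Thus the genuine work lies at the odd prime powers $n=p^{k}$ (primes included), since multiplicativity does not express these through smaller arguments.

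At a prime power $n=p^{k}$ I would use the functional equation in the form $f(n)=f(p_1)+f(p_2)-f(3)$, valid whenever $n+3=p_1+p_2$ with $p_1,p_2$ prime. Two elementary observations make this effective. First, for odd $n\ge 3$ the even number $n+3$ has no representation involving the prime $2$, since $2+(n+1)$ fails because $n+1$ is even and exceeds $2$; hence every Goldbach representation of $n+3$ consists of two odd primes. Second, writing $n+3=p_1+p_2$ with $p_1\ge p_2\ge 3$ forces $p_1\le n$, with equality only if $p_2=3$ and $n$ is prime. Consequently, as soon as $n+3$ has a representation different from the trivial $3+n$, both primes $p_1,p_2$ are odd and strictly smaller than $n$, so $f(p_1)$ and $f(p_2)$ are already pinned down by the inductive hypothesis: on the identity branch $f(n)=p_1+p_2-3=n$, and on the constant branch $f(n)=1+1-1=1$. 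For a proper prime power ($k\ge 2$) the number $n$ is not prime, so the trivial representation cannot occur and any Goldbach representation of $n+3$ does the job; only when $n$ is itself prime must I insist on a representation of $n+3$ other than the trivial $3+n$.

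It therefore remains to guarantee, for every odd prime power $n<10^{10}$, a Goldbach representation of $n+3$ with both summands below $n$, and in the prime case a representation of $n+3$ avoiding the pair $(3,n)$. This is exactly where the numerical bound enters and where I expect the only real obstacle to lie: the argument is otherwise routine bookkeeping, but its conclusion rests on a finite yet large verification that every relevant even number up to $10^{10}+3$ is a sum of two primes, with a second such representation available whenever $n$ is prime. I would discharge this by the computational verification of the Goldbach property throughout this range, which is precisely why the statement is limited to $n<10^{10}$; the restriction to odd $n$ reflects that this induction never leaves the odd numbers, the even values being recoverable afterward from the companion identity $f(q-1)=f(2)+f(q)-f(3)$ obtained by taking $p=2$.
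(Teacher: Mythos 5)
Your overall induction scheme (coprime factorization for composites, Goldbach-type representations for prime powers) matches the paper's, but there is a genuine gap in how you handle the case where $n$ itself is an odd prime. You correctly observe that for prime $n$ the representation $n+3 = 3+n$ is useless: substituting it into the functional equation gives $f(n) = f(3)+f(n)-f(3)$, a tautology. So your argument needs, for every prime $n < 10^{10}$, a \emph{second} Goldbach representation of $n+3$ with both primes at least $5$. You propose to discharge this ``by the computational verification of the Goldbach property,'' but that is not what the cited verification (Oliveira e Silva, Herzog, and Pardi, up to $4\times 10^{18}$) provides: it establishes the existence of \emph{at least one} representation of each even number (in fact it records only a minimal one), and it gives no guarantee that an even number of the form $n+3$ with $n$ prime has any representation besides $3+n$. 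So the key step of your prime case rests on an unproved (if plausible) strengthening of the verified statement. Note that this problem does not arise for proper prime powers $n = p^k$, $k \ge 2$: there $n$ is not prime, so no representation of $n+3$ can use the prime $3$, and the plain verification suffices --- which is exactly the only place the paper invokes it.

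The paper avoids Goldbach entirely in the prime case by a different device: if $n = 6k-1$ is prime, apply the functional equation with the pair $(p,q) = (n,7)$ to get $f(6k+3) = f(n) + f(7) - f(3)$, and if $n = 6k+1$, use $(p,q) = (n,5)$ to reach $6k+3$ again. The point is that $6k+3$ is an odd multiple of $3$, hence composite and expressible through smaller integers, so $f(6k+3)$ is already pinned down (by multiplicativity and the induction hypothesis, supplemented by the prime-power case when $6k+3$ is a power of $3$), and one then \emph{solves} for $f(n)$. Replacing your prime case with this trick repairs the proof. Two smaller remarks: your closing claim that the bound $10^{10}$ is ``precisely'' the limit of the Goldbach verification is off --- the verification reaches $4\times 10^{18}$, and the paper notes the lemma extends that far; the $10^{10}$ cutoff is chosen to match the Spiro set used later. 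Also, the even case is not handled by taking $p=2$ in the functional equation (that only yields numbers $q-1$ with $q$ prime); the paper needs Proth primes for that, in the next lemma.
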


\begin{proof}
We use induction. Let $n$ be odd and $11 < n < 10^{10}$. 

If $n$ is prime, then $n = 6k-1$ or $n = 6k+1$. Suppose $n = 6k-1$. Note that
\[
	f(n+4) = f(6k+3) = f(n+7-3) = f(n) + f(7) - f(3).
\]
Since $6k+3$ can be factored into the product of two smaller integers, $f(6k+3) = 1$ or $f(6k+3) = 6k+3$ by induction hypothesis. Thus, $f(n) = 1$ or $f(n) = n$ when $n = 6k-1$ is prime.

Similarly, if $n$ is a prime of the form $6k+1$, then $f(n) = 1$ or $f(n) = n$ by
\[
	f(n+2) = f(6k+3) = f(n+5-3) = f(n) + f(5) - f(3).
\]

If $n$ is not a prime, $n$ is either a product of two relatively prime integers or a power of a prime. The first case is easy by the multiplicity of $f$. So the second case remains.

Now, assume that $n$ is a power of a prime with exponent $\ge 2$. Then, $n+3$ is even and can be written as a sum of two primes $p$ and $q$ with $5 \le p, q < n$ by the numerical verification of the Goldbach Conjecture up to $4 \times 10^{18}$ \cite{OeS-H-P}.

Then, since $f(n) = f(n+3-3) = f(p+q-3) = f(p) + f(q) - f(3)$, we obtain that $f(n) = 1$ or $f(n) = n$ by the induction hypothesis.
\end{proof}

Indeed, those can be extended up to $n \le 4 \times 10^{18} - 3$.

\begin{lemma}\label{lem:even<10^10}
The results in Lemma \ref{lem:235711} can be extended up to $n$ even and $n < 10^{10}$.
\end{lemma}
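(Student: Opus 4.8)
The plan is to reduce the even case entirely to the odd case already settled in Lemma~\ref{lem:odd<10^10} (and its extension to $n\le 4\times 10^{18}-3$) by using the multiplicativity of $f$, so that the only genuinely new quantities are the values of $f$ at powers of $2$. Throughout I would argue inside the two alternatives furnished by Lemma~\ref{lem:235711}: either $f(2)=f(3)=f(5)=f(7)=f(11)=1$, where the goal is $f(n)=1$, or $f(2)=2,\ldots,f(11)=11$, where the goal is $f(n)=n$. The one specialization of the hypothesis that yields even arguments is $q=2$, giving
\[
  f(p-1)=f(p)+f(2)-f(3)
\]
for every prime $p$; here $p-1$ is even while $p$ is odd, so the right-hand side is controlled by Lemma~\ref{lem:235711} together with the odd values already known.

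Write an even $n<10^{10}$ as $n=2^{a}m$ with $a\ge 1$ and $m$ odd (possibly $m=1$). By multiplicativity $f(n)=f(2^{a})\,f(m)$, and since $m$ is odd with $m<10^{10}$, Lemma~\ref{lem:odd<10^10} gives $f(m)\in\{1,m\}$ in the relevant case. Thus everything reduces to computing $f(2^{a})$ for the finitely many exponents $1\le a\le 33$, since $2^{33}<10^{10}<2^{34}$. Here $f(2)$ is supplied by Lemma~\ref{lem:235711}, and when $2^{a}+1$ is prime (which for $2\le a\le 33$ happens exactly for $a\in\{2,4,8,16\}$) the displayed identity with $p=2^{a}+1$ gives $f(2^{a})=f(2^{a}+1)+f(2)-f(3)$ outright.

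The main obstacle is the remaining powers of $2$: for the other exponents $2^{a}+1$ is composite, and indeed $2^{a}=p+q-3$ forces $p+q=2^{a}+3$ to be odd, so one summand must be $2$ and the other $2^{a}+1$, which is not prime---hence the functional equation never lands on $2^{a}$ itself. I would bypass this by selecting, for each such $a$, a small odd auxiliary $m'$ with $p:=2^{a}m'+1$ prime; then
\[
  f(2^{a})\,f(m')=f(2^{a}m')=f(p)+f(2)-f(3),
\]
the first equality being multiplicativity and the second the displayed identity. As $f(m')\in\{1,m'\}$ is nonzero and $f(p)$ is a known odd value, solving for $f(2^{a})$ returns $1$ in the first case and $2^{a}$ in the second. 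The point requiring care is purely finite: for every non-Fermat $a\le 33$ one must exhibit such an $m'$ keeping $p$ below $4\times10^{18}$ so that the extension of Lemma~\ref{lem:odd<10^10} applies, which a short search achieves (primes $p\equiv 1\pmod{2^{a}}$ are abundant). Combining these values through $f(n)=f(2^{a})f(m)$ then gives $f(n)=1$ or $f(n)=n$ for every even $n<10^{10}$.
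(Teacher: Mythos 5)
Your proposal is correct and takes essentially the same route as the paper: reduce to $f(2^a)$ for $a \le 33$ via multiplicativity and Lemma~\ref{lem:odd<10^10}, then evaluate $f(2^a)$ by applying the functional equation with $q=2$ to a prime of the form $m'\cdot 2^a+1$, i.e.\ a Proth prime. The only difference is bookkeeping: the paper cites OEIS data (A057778) guaranteeing an odd multiplier $k \le 4141$ for every exponent up to $1000$ where you invoke a finite search, and both arguments equally rely on the remark that Lemma~\ref{lem:odd<10^10} extends to $n \le 4\times 10^{18}-3$, since the auxiliary primes exceed $10^{10}$.
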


\begin{proof}
It is enough to investigate $f(2^r)$ with $r \le 33$. Note that $k \cdot 2^r + 1$ with $k < 2^r$ is called Proth number. If a Proth number is prime, it is called a Proth prime. It is verified that there exists an odd integer $k \le 4141$ such that $k \cdot 2^r + 1$ is a Proth prime for $1 \le r \le 1000$ in The On-Line Encyclopedia of Integer Sequences (OEIS, \url{https://oeis.org/A057778}), although the infinitude of Proth primes is not yet proved \cite{ProthSearch}.

Then, $k \cdot 2^r +1$ is an odd prime and
\[
	f(k)\,f(2^r) = f\big( (k \cdot 2^r + 1) + 2 - 3 \big) = f(k \cdot 2^r + 1) + f(2) - f(3).
\]
Thus, we are done by Lemmas \ref{lem:235711} and \ref{lem:odd<10^10}.
\end{proof}

If the Goldbach Conjecture and the infinitude of Proth primes for all exponents were proved, Theorem \ref{thm:-3} could be easily proved. But, neither of them has not yet been proved, so that we need other strategy. In the following lemma, $v_p(n)$ means the exponent of $p$ in the prime factorization of $n$ when $p$ is a prime and $n$ is a positive integer. The set $H$ was defined by Spiro and the numerical verification of the Goldbach Conjecture was up to $2\times10^{10}$ at that time. We would call the set $H$ in the lemma the \emph{Spiro set}.

\begin{lemma}\label{lem:m+q_in_H}
Let
\[
H = \{ n \,|\, v_p(n) \le 1 \text{ if } p>1000; v_p(n) \le \lfloor 9\log_p{10} \rfloor - 1 \text{ if } p<1000 \}.
\]
For any integer $m > 10^{10}$, there is an odd prime $q \le m-1$ such that $m+q \in H$. 
\end{lemma}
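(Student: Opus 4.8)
The plan is to prove the statement by a counting (contrapositive) argument: among the roughly $\pi(m-1)$ primes $q \le m-1$, I will show that only a negligible fraction can fail to put $m+q$ into $H$, so an admissible $q$ must exist. First I would unwind the definition of $H$. An integer $N$ lies outside $H$ exactly when it is divisible by one of the \emph{bad moduli}
\[
b \in \mathcal{B} := \{\, p^2 : p>1000 \text{ prime}\,\} \cup \{\, p^{\lfloor 9\log_p 10\rfloor} : p<1000 \text{ prime}\,\},
\]
and every such $b$ exceeds $10^6$ (for $p<1000$ one has $10^6 < p^{\lfloor 9\log_p 10\rfloor} \le 10^9$, and for $p>1000$ clearly $p^2 > 10^6$). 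Writing $N=m+q$, the condition $b \mid m+q$ is the single congruence $q \equiv -m \pmod{b}$, so the problem becomes one of counting primes in arithmetic progressions.

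Next I would bound, for each $b \in \mathcal{B}$, the number $A_b$ of primes $q \le m-1$ with $q \equiv -m \pmod b$; only $b \le 2m$ can occur, since $m < m+q < 2m$. If $\gcd(m,b)>1$, then the prime $p$ underlying $b=p^{e}$ divides $m$, which forces $p \mid q$, hence $q=p$ and $A_b \le 1$; there are at most $\omega(m)=O(\log m)$ such moduli. If $\gcd(m,b)=1$ and $b<m$, the Brun--Titchmarsh inequality gives $A_b \le \tfrac{2m}{\phi(b)\log(m/b)}$. Because $m>10^{10}$, every small-prime modulus satisfies $b \le 10^9 < m^{9/10}$, so $\log(m/b) > \tfrac1{10}\log m$, while every $b \le \sqrt m$ satisfies $\log(m/b) \ge \tfrac12\log m$; in either case $A_b \le \tfrac{20m}{\phi(b)\log m}$. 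The only moduli left uncovered are the squares $p^2$ with $\sqrt m < p^2 \le 2m$, i.e. $m^{1/4}<p\le\sqrt{2m}$, where $\log(m/b)$ degenerates; for these I would drop Brun--Titchmarsh and use the trivial bound $A_b \le m/b+1$.

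Summing the three contributions, the dominant term is at most
\[
\frac{20m}{\log m}\sum_{b \in \mathcal{B}} \frac{1}{\phi(b)} \le \frac{20m}{\log m}\left( \sum_{p>1000}\frac{1}{p(p-1)} + \sum_{p<1000}\frac{1}{\phi\big(p^{\lfloor 9\log_p 10\rfloor}\big)} \right) < \frac{0.05\,m}{\log m},
\]
where the first series is $<1/1000$ by telescoping and the second is below $168\cdot 10^{-6}$. The trivial-bound squares contribute $\sum_{m^{1/4}<p\le\sqrt{2m}}(m/p^2+1)=O(m^{3/4})$, and the moduli with $\gcd(m,b)>1$ contribute $O(\log m)$. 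Hence the number of bad primes $q \le m-1$ is at most $\tfrac{0.05\,m}{\log m}+O(m^{3/4})$. Comparing with $\pi(m-1) > \tfrac{m-1}{\log(m-1)} \ge \tfrac{0.9\,m}{\log m}$ (valid since $m>10^{10}$), the number of primes $q \le m-1$ with $m+q\in H$ is at least $\tfrac{0.85\,m}{\log m}-O(m^{3/4})-1>0$, where the final $1$ discards the even prime $q=2$. Any such $q$ proves the lemma.

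The main obstacle is uniformity. A purely elementary count of the \emph{integers} $m+q$ divisible by some $b \in \mathcal{B}$ only saves a fixed positive proportion of $m$, and is therefore defeated as soon as $m$ is large enough that $\pi(m)\sim m/\log m$ drops below it; primes are simply too sparse for the integer count to win. The argument must instead exploit the primality of $q$ to harvest the extra factor $1/\log m$ coming from primes in progressions, which is precisely what Brun--Titchmarsh supplies uniformly in $b$. The one genuine technical point is the degeneration of the denominator $\log(m/b)$ as $b\to m$, which I handle by peeling off the large square moduli $p^2>\sqrt m$ and bounding them trivially, keeping their total at $O(m^{3/4})$, comfortably inside the error term.
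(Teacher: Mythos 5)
Your proof is correct, but note that the paper does not actually prove this lemma: its entire proof is a one-sentence citation of \cite[Lemma 2.4]{CFYZ}, which in turn follows \cite[Lemma 5]{Spiro}. What you have written is a self-contained reconstruction of essentially the argument living inside those citations: translate $m+q\notin H$ into the congruences $q\equiv -m \pmod{b}$ for the bad prime-power moduli $b$ (each exceeding $10^6$), apply Brun--Titchmarsh in the Montgomery--Vaughan form $\pi(x;b,a)\le 2x/\big(\phi(b)\log(x/b)\big)$ when $b$ is small compared with $m$, bound the large square moduli $p^2>\sqrt{m}$ trivially, handle the moduli with $\gcd(m,b)>1$ separately, and compare the total against $\pi(m-1)$. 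This is the same sieve-counting strategy as in the cited sources, so your route is not genuinely different from the proof the paper outsources; what your version buys is self-containedness and constants tuned to this specific $H$ (the telescoping bound $\sum_{p>1000}1/(p(p-1))<1/1000$, and $\phi(b)>10^6$ for each of the at most $168$ small-prime moduli, which follows from $b>10^9/p$ and hence $\phi(b)>10^9(p-1)/p^2>10^6$), while the paper's citation buys brevity at the price of the reader having to check that the set $H$ and the threshold $10^{10}$ used here match those in \cite{CFYZ}. Two points would make your argument fully rigorous for every $m>10^{10}$ rather than just for $m$ large: replace the $O(m^{3/4})$ by an explicit bound, e.g. $\sum_{m^{1/4}<p\le\sqrt{2m}}(m/p^2+1)\le m/(m^{1/4}-1)+\sqrt{2m}<4m^{3/4}$, which at $m=10^{10}$ is roughly $3\times 10^7$ against $0.85\,m/\log m\approx 3.7\times 10^8$; and quote an explicit form of the prime counting lower bound, such as $\pi(x)>x/\log x$ for $x\ge 17$ (Rosser--Schoenfeld), to justify $\pi(m-1)>0.9\,m/\log m$.
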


\begin{proof}
This lemma is the consequence of \cite[Lemma 2.4]{CFYZ} which follows the proof of \cite[Lemma 5]{Spiro}.
\end{proof}

%

\begin{lemma}[\cite{Cudakov, Estermann, vdCorput}]\label{lem:almost_every_integer}
Almost every even positive integer is expressible as the sum of two primes.
\end{lemma}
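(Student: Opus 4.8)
This is a classical statement from analytic number theory, and the natural route is the Hardy--Littlewood circle method combined with a variance (second-moment) argument. I would work with the von Mangoldt weighting: set $S(\alpha) = \sum_{n \le N} \Lambda(n)\, e(n\alpha)$, where $e(x) = e^{2\pi i x}$, and define the weighted representation count $r(n) = \sum_{a+b=n} \Lambda(a)\Lambda(b)$. By orthogonality,
\[
	r(n) = \int_0^1 S(\alpha)^2\, e(-n\alpha)\, d\alpha.
\]
The goal is to show that the number $E(N)$ of even $n \le N$ that are not a sum of two primes satisfies $E(N) = o(N)$. Passing from the $\Lambda$-weighted count back to an honest count of prime pairs is harmless: if $n$ fails to be a sum of two primes, then $r(n)$ receives contributions only from terms in which $a$ or $b$ is a proper prime power, and there are $\ll N^{1/2}(\log N)^2$ of these, far below the expected size.

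The plan is to split $[0,1]$ into major arcs $\mathfrak{M}$ (neighbourhoods of rationals with small denominator) and minor arcs $\mathfrak{m}$. On $\mathfrak{M}$ the integral produces the expected main term,
\[
	\int_{\mathfrak{M}} S(\alpha)^2\, e(-n\alpha)\, d\alpha = \mathfrak{S}(n)\, n + (\text{error}),
\]
where $\mathfrak{S}(n)$ is the singular series, bounded below by a positive absolute constant for even $n$. This step rests on the distribution of primes in arithmetic progressions in the form of the Siegel--Walfisz theorem, needed to control $S(\alpha)$ near each major-arc rational uniformly. I would then aim for the second-moment bound
\[
	\sum_{n \le N} \left| r(n) - \mathfrak{S}(n)\, n \right|^2 \ll \frac{N^3}{(\log N)^{A}}
\]
for every fixed $A > 0$.

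The decisive input is the minor-arc contribution, which I would handle by Bessel's inequality:
\[
	\sum_{n \le N} \left| \int_{\mathfrak{m}} S(\alpha)^2\, e(-n\alpha)\, d\alpha \right|^2 \le \int_{\mathfrak{m}} |S(\alpha)|^4\, d\alpha \le \Bigl( \sup_{\alpha \in \mathfrak{m}} |S(\alpha)| \Bigr)^2 \int_0^1 |S(\alpha)|^2\, d\alpha.
\]
Here $\int_0^1 |S(\alpha)|^2\, d\alpha = \sum_{n\le N}\Lambda(n)^2 \ll N\log N$ by Parseval, while Vinogradov's estimate for exponential sums over primes supplies the pointwise minor-arc bound $\sup_{\alpha \in \mathfrak{m}} |S(\alpha)| \ll N(\log N)^{-A}$. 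Combining, the minor arcs contribute $\ll N^3 (\log N)^{1-2A}$, which is $o(N^3)$.

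Finally, if an even $n$ in a dyadic range $(N/2, N]$ is not a sum of two primes, then $r(n)$ is negligible, so $|r(n) - \mathfrak{S}(n)\, n|^2 \gg N^2$. Summing over the exceptional integers in this range and invoking the second-moment bound yields an exceptional count $\ll N(\log N)^{-A}$; summing the dyadic estimates over $N, N/2, N/4, \dots$ gives $E(N) = o(N)$, which is the assertion. The main obstacle is the minor-arc analysis: securing Vinogradov's uniform estimate for $S(\alpha)$ --- equivalently, genuine cancellation in sums over primes twisted by additive characters --- is where the real analytic effort lies, and it is precisely the step that makes the unconditional ``almost all'' statement available even though the full Goldbach conjecture remains open.
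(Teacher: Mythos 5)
The paper offers no proof of this lemma at all: it is quoted as a classical 1937--38 theorem of \v{C}udakov, Estermann, and van der Corput, with only the citations standing in for an argument. Your sketch --- circle method with a major/minor arc split, Siegel--Walfisz on the major arcs, Vinogradov's pointwise minor-arc bound combined with Bessel's inequality and Parseval, and a second-moment argument to bound the exceptional set dyadically --- is correct in outline and is essentially the argument of those cited references (and of modern textbook treatments), so it matches the approach the paper defers to rather than departing from it.
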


\begin{lemma}\label{lem:n_in_H}
The restricted function $f|_H$ is the identity function or a constant function on $H$. 
\end{lemma}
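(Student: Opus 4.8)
The plan is to promote the dichotomy already secured on the initial segment to all of $H$ by a single strong induction carried on primes. By Lemma~\ref{lem:235711} the value $f(2)$ is forced to be $1$ or $2$, and Lemmas~\ref{lem:odd<10^10}--\ref{lem:even<10^10}, together with the remark extending them to $4\times 10^{18}-3$, show that the chosen branch is global below that threshold: either $f(n)=n$ for every $n$ in the range, or $f(n)=1$ for every such $n$. I will treat the identity branch, the constant branch being word-for-word the same with each ``$=n$'' replaced by ``$=1$'' and with $f(3)=1$. The first step is to reduce the claim to primes. Since $f$ is multiplicative and $H$ is closed under passing to divisors (its defining inequalities are imposed one prime at a time), it is enough to show $f(p)=p$ for every prime $p$: any $n\in H$ is a product of prime powers, a component $p^{v_p(n)}$ with $p<1000$ satisfies $p^{v_p(n)}<10^{9}$ by the defining bound and hence already has $f(p^{v_p(n)})=p^{v_p(n)}$, while a component with $p>1000$ is a prime to the first power; multiplicativity then gives $f(n)=n$ as soon as $f$ is the identity at every prime.

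To prove $f(p)=p$ for all primes I would induct strongly on $p$, the base case $p\le 4\times 10^{18}-3$ being supplied by the lemmas above. Fix a prime $p$ beyond the base range and assume $f(\ell)=\ell$ for every prime $\ell<p$. By the reduction just made, this hypothesis already delivers $f(m)=m$ for every $m\in H$ all of whose prime factors are smaller than $p$. Now apply the Spiro-set Lemma~\ref{lem:m+q_in_H} to the integer $p-3>10^{10}$: it yields an odd prime $q\le p-4$ with $N:=p+q-3\in H$. Because $q\le p-4$ we have $N<2p$, so if $N$ is composite its largest prime factor is at most $N/2<p$; thus all prime factors of $N$ lie below $p$ and the previous sentence gives $f(N)=N$. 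The functional equation now reads $N=f(N)=f(p)+f(q)-f(3)=f(p)+q-3$, whence $f(p)=p$ and the induction closes. Together with the reduction this shows $f|_H$ is the identity (and, on the other branch, the constant $1$).

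The delicate point, which I expect to be the main obstacle, is ensuring that the integer $N=p+q-3\in H$ returned by Lemma~\ref{lem:m+q_in_H} may be taken composite. If $N$ were prime it would be a large prime exceeding $p$, and the functional equation would only link the two unknown values $f(p)$ and $f(N)$, so the argument would stall. For an element of $H$ above $10^{10}$ the phrase ``not composite'' means exactly ``prime'', since a higher prime power $p^{a}$ with $a\ge 2$ and $p<1000$ is excluded by $p^{a}<10^{9}$ and one with $p>1000$ is excluded by $v_p\le 1$; hence I must avoid the thin set of $q$ for which $p+q-3$ is prime. This cannot be read off the bare existence statement of Lemma~\ref{lem:m+q_in_H}; it has to be extracted from its proof (following \cite[Lemma~2.4]{CFYZ} and \cite[Lemma~5]{Spiro}), which in fact exhibits a whole positive-density family of admissible primes $q\le p-4$ with $p+q-3\in H$. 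As the $q$ making $p+q-3$ prime form a sparser set, at least one admissible $q$ leaves $N$ composite, and with that choice the induction goes through.
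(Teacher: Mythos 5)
Your strategy is genuinely different from the paper's. The paper proves the lemma by strong induction over the elements of $H$ themselves: the composite case is handled exactly as in your reduction, but for a prime $n\in H$ of the form $6k\mp1$ it pairs $n$ with the \emph{fixed} primes $7$ or $5$, so that the partner sum is $6k+3$, which is divisible by $3$ and hence automatically composite; Lemma~\ref{lem:m+q_in_H} is never invoked in that proof at all. You instead induct on primes and call Lemma~\ref{lem:m+q_in_H} with $m=p-3$, which is Spiro's original strategy. What your route buys is control of the partner sum: $N=p+q-3$ lies in $H$, so (provided $N$ is composite) every prime-power component of $N$ is either below $10^9$ or a single prime below $p$, and the deduction $f(p)=p$ is immediate; your reduction to primes, the bound $N<2p$, and the final identity are all correct. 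What it costs is precisely your ``delicate point.'' (For what it is worth, the paper's choice has the dual cost: its partner sum factors as $3(2k+1)$, and the cofactor $2k+1=(n+4)/3$ need not lie in $H$ nor below $10^{10}$, so the paper's own appeal to its induction hypothesis at that step is not airtight either; your route is the one that addresses this.)

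However, as written your proof has a genuine gap at the delicate point, and the proposed repair does not close it. Because $n_0=3$ is odd, $N=p+q-3$ is odd (unlike Spiro's even $p+q$), so $N$ being prime is a live possibility, and you correctly observe the induction then stalls. Your fix rests on two quantitative claims: (i) the proof of Lemma~\ref{lem:m+q_in_H} yields a positive proportion, say $c_1\pi(p)$, of admissible primes $q\le p-4$ --- this is indeed extractable from the counting in \cite{CFYZ}, \cite{Spiro}; and (ii) the primes $q\le p-4$ making $p+q-3$ prime number fewer than $c_1\pi(p)$. Claim (ii) is an upper-bound sieve estimate (Brun/Selberg) of size $\ll \prod_{r\mid p-3,\,r>2}\bigl(1+\tfrac{1}{r-2}\bigr)\, p/\log^2 p$; it appears nowhere in the paper or in the lemmas you may cite, and --- crucially --- since your induction base cannot be pushed beyond $4\times10^{18}$ (it comes from the Goldbach verification), you would need it with explicit constants at $\log p\approx 43$, where it is not even clearly below $\pi(p)$: the singular-series product alone can exceed $5$ when $p-3$ is divisible by all primes up to $47$, and multiplied by the classical sieve constant this overwhelms the factor $1/\log p$. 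So ``the $q$ making $p+q-3$ prime form a sparser set'' is exactly the statement that must be proved, and near the threshold it is tight. A cleaner repair, in the spirit of the paper's own device, is to rerun the proof of Lemma~\ref{lem:m+q_in_H} with $q$ restricted to primes $q\equiv -p\pmod 3$ (possible since $p\not\equiv 0\pmod 3$): then $3\mid N$ and $N>3$, so $N$ is composite with no sieve input, and the rest of your induction goes through verbatim.
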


\begin{proof}
Assume $f(n) = n$ for $n = 2, 3, 5, 7, 11$.
If $n < 10^{10}$, then $f(n)=n$ from Lemmas \ref{lem:odd<10^10} and \ref{lem:even<10^10}. Let $n \in H$ with $n \ge 10^{10}$ and assume that $f(m)=m$ for all $m \in H$ with $m < n$. If $n$ is not a prime power, then $f(n) = f(a)f(b)$ with $(a,b)=1$ and $a,b > 1$. Since $f(a)=a$ and $f(b)=b$ by the induction hypothesis, $f(n)=n$. 

Now, if $n$ is a prime power, then $n$ is a prime by the definition of $H$. If $n = 6k-1$, then consider $n + 7 - 3 = 6k+3$. Since
\[
	f(n+4) = f(6k+3) = f(n+7-3) = f(n) + f(7) - f(3)
\]
and $6k+3$ can be factored into the product of two smaller integers, $f(n) = n$.

Similarly, if $n = 6k+1$, then
\[
	f(n+2) = f(6k+3) = f(n+5-3) = f(n) + f(5) - f(3)
\]
yields $f(n) = n$.

By the same reasoning, we can conclude that $f(n) = 1$ if $f(2) = f(3) = f(5) = f(7) = f(11) = 1$.

\end{proof}

\begin{lemma}[\protect{\cite[Lemma 7]{Spiro}}]\label{lem:H_n}
For any positive integer $n$, put
\[
H_n = 
\begin{cases}
\{ mn \,:\, m \in H, (m,n)=1\} & \text{if } 2 \mid n; \\
\{ 2mn \,:\, 2m \in H, (m,n)=1\} & \text{if } 2 \nmid n. \\
\end{cases}
\]
Then $H_n$ satisfies the following properties:
\begin{enumerate}
\item Every element of $H_n$ is even.
\item The set $H_n$ has positive lower density.
\end{enumerate}
\end{lemma}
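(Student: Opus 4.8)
The plan is to treat the two assertions separately, the first being immediate and the second requiring a density computation. For property (1), observe that in both branches of the definition every element of $H_n$ carries an explicit factor of $2$: if $2 \mid n$ then each $mn$ is a multiple of $n$ and hence of $2$, while if $2 \nmid n$ every element is literally $2mn$. So property (1) follows directly from the definition with no further work.

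For property (2), the strategy is first to show that the Spiro set $H$ itself has positive density, and then to verify that passing to the coprimality-restricted, rescaled set $H_n$ preserves positivity. Write $e_p = 1$ for $p > 1000$ and $e_p = \lfloor 9\log_p 10\rfloor - 1$ for $p < 1000$, so that $H = \{m : v_p(m) \le e_p \text{ for all primes } p\}$. The density of a single local condition $v_p(m) \le e_p$ is $1 - p^{-(e_p+1)}$, and since these conditions live at distinct prime powers they are asymptotically independent, so by the Chinese Remainder Theorem the density of $H$ should equal the product $\prod_p (1 - p^{-(e_p+1)})$. I would then argue this product is positive: the finitely many factors with $p < 1000$ each lie in $(0,1)$, and the tail $\prod_{p>1000}(1 - p^{-2})$ converges to a positive limit because $\sum_{p>1000} p^{-2} < \infty$.

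With the positive density of $H$ in hand, I would handle $H_n$ by the same multiplicative bookkeeping. In the case $2 \mid n$, the set $\{m \in H : (m,n)=1\}$ is cut out by the local conditions $v_p(m) = 0$ for $p \mid n$ (each of density $1 - 1/p$) together with the unchanged $H$-conditions for $p \nmid n$; the resulting product is again positive, and multiplying every such $m$ by $n$ divides the density by $n$, so $H_n$ inherits positive density $\tfrac1n\, d(\{m\in H : (m,n)=1\})$. The odd case is parallel: the requirement $2m \in H$ becomes $v_2(m) \le e_2 - 1$ together with the odd-prime conditions, and rescaling by $2n$ divides the density by $2n$; positivity is preserved for the same reason.

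The main obstacle is making the asymptotic independence rigorous for an intersection of infinitely many local conditions, since the density of $H$ is not a finite inclusion–exclusion. One must truncate at a large bound $P$, estimate the contribution of the primes $p > P$ by the convergent tail $\sum_{p>P} p^{-2}$, and show this error tends to $0$ as $P \to \infty$, so that the lower density is bounded below by the positive product. This uniform tail control is exactly the sieve estimate underlying Spiro's Lemma 5 and the cited \cite[Lemma 2.4]{CFYZ}, which is why the statement is attributed to \cite[Lemma 7]{Spiro}.
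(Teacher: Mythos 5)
Your proof is correct. Note, however, that the paper itself supplies no argument for this lemma at all: it is stated with the attribution \cite[Lemma 7]{Spiro} and the proof is deferred entirely to Spiro's paper, so there is no in-paper proof to compare against; what you have done is reconstruct, in a self-contained way, essentially the standard density argument that lies behind Spiro's lemma. Your treatment of property (1) is immediate and fine, and your treatment of property (2) — exact local densities $1-p^{-(e_p+1)}$, truncation at a finite set of primes where the condition is genuinely periodic (so the Chinese Remainder Theorem applies), and control of the primes $p>P$ by the convergent tail $\sum_{p>P}p^{-2}$ — is the rigorous way to handle the infinite intersection, and the rescaling step (multiplication by $n$ or $2n$ divides counting functions, hence densities, by that factor) is also correct. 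Two small remarks. First, you could shortcut the whole density computation: since $e_p\ge 1$ for every prime $p$ (indeed $e_p=\lfloor 9\log_p 10\rfloor-1\ge 2$ for $p<1000$), the set $H$ contains all squarefree integers, and the sets $\{m\in H: (m,n)=1\}$, resp.\ $\{m : 2m\in H,\ (m,n)=1\}$, contain the squarefree (resp.\ odd squarefree) integers coprime to $n$, which have positive density by elementary means; this avoids the truncation argument entirely. Second, your closing attribution is slightly off: Spiro's Lemma 5 and \cite[Lemma 2.4]{CFYZ} concern the existence of a prime $q$ with $m+q\in H$ (Lemma \ref{lem:m+q_in_H} of this paper), not the density statement of Lemma \ref{lem:H_n}; this does not affect the validity of your argument.
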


\section{Proofs of Theorems}

Let us start to prove Theorem \ref{thm:-3}. Suppose that there exists $n$ for which $f(n) \ne n$. For $kn \in H_n$, we have that
\[
	f(kn) = f(k)\,f(n) = k\,f(n).
\]
If $f(kn) = kn$, then $f(n) = f(kn)/k = kn/k = n$, which contradicts. So $f(kn) \ne kn$ for every $kn \in H_n$.

But, if $kn+3$ with $k$ odd can be represented as a sum of two primes $p$ and $q$, then
\[
	f(kn) = f(p+q-3) = f(p) + f(q) - f(3) = p+q-3 = kn.
\]
Thus, this implies that there exist many counterexamples to the Goldbach Conjecture whose density is positive. But, this contradicts Lemma \ref{lem:almost_every_integer}. Therefore, $f(n) = n$ for all $n$.

We can prove Theorem \ref{thm:-1} in the similar way. First, we have that
\begin{align*}
f(3) 
	&= f(2+2-1) = f(2)+f(2)-f(1), \\
f(5)
	&= f(3+3-1) = f(3)+f(3)-f(1), \\
f(6) 
	&= f(2)\,f(3) \\
	&= f(5+2-1) = f(5) + f(2) - f(1).
\end{align*}

Let $a = f(2)$, $b = f(3)$, and $c = f(5)$. Then,
\[
b = 2a - 1, \qquad
c = 2b - 1, \qquad
ab = c + a - 1.
\]
Thus,
\[
	a(2a-1) = \big(2(2a-1)-1\big)+a-1
\]
and it becomes
\[
	a^2 - 3a + 2 = 0. 
\]
Hence, $a = 1$ or $a = 2$.

Next, we should check $f(2^r)$ as in Lemma \ref{lem:even<10^10}. We can use $k \cdot 2^r - 1$ instead of $k \cdot 2^r + 1$. The list of prime $k \cdot 2^r - 1$ with $0 \le r \le 10000$ is in OEIS (\url{https://oeis.org/A126717}). See also \cite{ProthSearch}.

%
%
%
%
%
%

\section*{Acknowledgment}
The author would like to thank the Korea Institute for Advanced Study(KIAS) for its support and hospitality.



\begin{thebibliography}{9}



\bibitem{CFYZ} Y.-G. Chen, J.-H. Fang, P. Yuan, and Y. Zheng, 
On multiplicative functions with $f(p + q + n_0)=f(p)+f(q)+f(n_0)$, 
\textit{J. Number Theory} \textbf{165} (2016), 270--289.

\bibitem{Cudakov} N. \v{C}udakov, 
On the Goldbach's problem, 
\textit{Dokl. Akad. Nauk SSSR} (1937), 331--334.

\bibitem{Estermann} T. Estermann, 
On the Goldbach's problem: Proof that almost all even positive integers are sums of two primes,
\textit{Proc. London Math. Soc. (2)} \textbf{44} (1938), 307--314.

\bibitem{OeS-H-P} T. Oliveira e Silva, S. Herzog, and S. Pardi, 
Empirical verification of the even the Goldbach conjecture and computation of prime gaps up to $4\cdot10^{18}$, 
\textit{Math. Comp.} \textbf{83} (2014) 2033--2060.

\bibitem{Park} P.-S. Park,
Additive uniqueness of $\mathrm{PRIMES}-1$ for multiplicative functions,
\textit{Int. J. Number Theory},
to appear.

\bibitem{ProthSearch} Proth Search Page, \url{http://www.prothsearch.com}

\bibitem{Spiro} C. A. Spiro, 
Additive uniqueness sets for arithmetic functions, 
\textit{J. Number Theory} \textbf{42} (1992), 232--246.

\bibitem{vdCorput} J. G. van der Corput,
Sur l'hypoth\`ese de the Goldbach,
\textit{Proc. Akad. Wet. Amsterdam} \textbf{41} (1938), 76--80.

\end{thebibliography}
\end{document}